\title[On the universal property of Waldhausen's $K$-theory]{On the universal property of \\Waldhausen's $K$-theory}
\author{Wolfgang Steimle}
\address{Institut f\"ur Mathematik\\
	Universit\"at Augsburg\\
        D-86135 Augsburg, Germany}
\email{wolfgang.steimle@math.uni-augsburg.de}
\date{\today}
\DeclareMathAlphabet{\matheurm}{U}{eur}{m}{n}
\newcommand{\Wald}{\matheurm{Wald}}
\newcommand{\psset}{\matheurm{sSet}_*}
\newcommand{\fpset}{\matheurm{Set}^f_*}
\newcommand{\catinfst}{\matheurm{Cat}_\infty^{\operatorname{st}}}
\newcommand{\Eul}{\matheurm{Eul}}
\DeclareMathOperator{\Fun}{Fun}
\DeclareMathOperator{\hocolim}{hocolim}
\DeclareMathOperator{\id}{id}
\DeclareMathOperator{\iso}{iso}
\DeclareMathOperator{\map}{map}
  \newcommand{\IN}{\mathbb{N}}
  \newcommand{\cala}{\mathcal{A}}
  \newcommand{\calb}{\mathcal{B}}
  \newcommand{\calc}{\mathcal{C}}
  \newcommand{\cald}{\mathcal{D}}
  \newcommand{\cali}{\mathcal{I}}
  \newcommand{\cals}{\mathcal{S}}
  \newcommand{\bfP}{{\mathbf P}}
\theoremstyle{plain}
\newtheorem{theorem}{Theorem}[section]
\newtheorem{lemma}[theorem]{Lemma}
\theoremstyle{definition}
\newtheorem{definition}[theorem]{Definition}
\theoremstyle{remark}
\newtheorem*{remark*}{Remark}
\newtheorem*{example*}{Example}
\newtheorem*{examples*}{Examples}
\newtheorem*{notation*}{Notation}
\newtheorem*{explanation*}{Explanation}
\newtheorem*{acknowledgement}{Acknowledgement}
\newcommand{\arincl}{\ar@{^{(}->}}
\newcommand{\arinclinv}{\ar@{_{(}->}}
\newcommand{\obj}{\operatorname{ob}}
\newcommand{\Ho}{\operatorname{Ho}}
\newcommand{\naive}{{\operatorname{naive}}}
\newcommand{\sh}{\operatorname{sh}}
\newcommand{\univ}{\mathrm{univ}}
\begin{document}

\begin{abstract}
In this note we show that Waldhausen's $K$-theory functor from Waldhausen categories to spaces has a universal property: It is the target of the ``universal global Euler characteristic'', in other words, the ``additivization'' of the functor $\calc\mapsto \obj(\calc)$. We also show that other functors on the category of Waldhausen categories can be ``additivized''.
\end{abstract}

\maketitle

In the context of $\infty$-categories universal characterizations of algebraic $K$-theory have been recently given by Blumberg--Gepner--Tabuada \cite{BGT} and Barwick \cite{Barwick}. In this note we show that Waldhausen $K$-theory, in Waldhausen's original setup, has a universal property, pointing out its simplicity and generality -- in fact almost no structure on the category of Waldhausen categories is used but the bare fact that one can reasonably define an $S_\bullet$-construction. We also show that Waldhausen's $S_\bullet$-construction yields an ``additivization'' procedure for other functors.

This paper is based upon the fundamentals of Waldhausen's approach to $K$-theory \cite{Wald} and we assume that the reader is familiar with the basic definitions. Denote by $\Wald$ the category of (small) Waldhausen categories (\emph{i.e.,} categories with cofibrations and weak equivalences) and exact functors between these. 

Recall that a natural transformation $T$ between two exact functors $f, g\colon \calc\to \cald$ is  called \emph{weak equivalence} if  each $T_c\colon f(c)\to g(c)$ is a weak equivalence in $\cald$; equivalently if the adjoint of $T$ is a (necessarily exact) functor 
\[\calc\to w_1\cald\subset \Fun([1], \cald)\]
landing inside the full subcategory $w_1\cald$ on the weak equivalences $d\xrightarrow\simeq d'$. 

A sequence $f\xrightarrow{S} g\xrightarrow{T} h$ of two natural transformations between exact functors is called \emph{cofiber sequence} if each $f(c)\xrightarrow{S_c} g(c) \xrightarrow{T_c} h(c)$ is a cofiber sequence in $\cald$, and, in addition, for each cofibration $c\rightarrowtail c'$ in $\calc$, the induced map $g(c)\cup_{f(c)} f(c')\to g(c')$ is a cofibration; equivalently if $S$ and $T$ define an exact functor
\[\calc\to S_2\cald\]
into Waldhausen's extension category.

\begin{definition}\label{defn:global_euler_characteristic}
A \emph{global Euler characteristic} is a pair $(A,\chi)$ where $A\colon \Wald\to\psset$ is a functor and $\chi\colon \obj(\calc)\to A(\calc)$ is a natural transformation, such that $A$ satisfies for any Waldhausen categories $\calc$ and $\cald$:
\begin{enumerate}
 \item $A$ is \emph{product-preserving}: The canonical map $A(\calc\times\cald)\to A(\calc)\times A(\cald)$ is a weak equivalence.
 \item $A$ is \emph{homotopical}: If two exact functors $f,g\colon \calc\to \cald$ are related by a weak equivalence, then $A(f), A(g)$ are weakly homotopic. Equivalently, the canonical functor
 \[s_0\colon \calc\to w_1 \calc, \quad c \mapsto (c \xrightarrow[\simeq]{\id} c)\]
 induces a weak equivalence $A(\calc)\to  A(w_1\calc)$.
 \item $F$ is \emph{pre-additive}: If $f\rightarrowtail g\twoheadrightarrow h\colon \calc\to \cald$ is a cofibration sequence of exact functors, then $A(g)\simeq A(f\vee h)$. Equivalently, the canonical functor
 \[(d_2, d_0)\colon S_2\calc\to \calc\times\calc,\quad (c\rightarrowtail d\twoheadrightarrow e)\mapsto (c,e)\]
 induces a weak equivalence $A(S_2\calc)\to A(\calc\times\calc)$.
 \item $A$ is \emph{group-like}: The space $A(\calc)$, with the $H$-space structure given by
 \[A(\calc)\times A(\calc) \xleftarrow[\simeq]{(A(d_2), A(d_0))} A(S_2\calc) \xrightarrow{A(d_1)} A(\calc),\]
 is group-like.
\end{enumerate}
A functor $A\colon \Wald\to\psset$ satisfying these four properties will be called  \emph{additive}.
\end{definition}

\begin{remark*}
 \begin{enumerate}
  \item It follows from Segal \cite{Segal} (or the constructions in section \ref{sec:additivization}) that an additive functor $A$ naturally takes values in infinite loop spaces.
  \item Let $(A,\chi)$ be a global Euler characteristic. If $c$ is an object of some Waldhausen category $\calc$, then $\chi(c)\in A(\calc)$ is something like an Euler characteristic of $c$. Indeed, if there exists a weak equivalence $f\colon c\to d$ in $\calc$, then $\chi(c)\simeq \chi(d)$: the two elements are the images of the characteristic $\chi(c\to d)\in A(w_1\calc)$ under the two maps induced by
\[d_1, d_0\colon w_1\calc\to \calc, \quad (c\to d)\mapsto c,d;\]
but $d_1$ and $d_0$ are weakly equivalent so $F(d_1)\simeq F(d_0)$. A similar argument shows that if there exists a cofibration sequence $c\rightarrowtail d\twoheadrightarrow e$ in $\calc$, then $\chi(c)+\chi(e)\simeq \chi(d)$ for the $H$-space structure on $A(\calc)$. 
 \end{enumerate}
\end{remark*}

The prototypical example of a global Euler characteristic is $A=K:= \Omega \vert wS_\bullet - \vert$, Waldhausen's $K$-theory functor,\footnote{In our simplicial setting $\vert-\vert$ denotes taking diagonals, and $\Omega$ denotes the homotopy invariant loop space functor $\map(\Delta^1/\partial\Delta^1,f(-))$ where $f$ is a Kan replacement functor.} and $\chi_\univ\colon \obj(\calc)\to K(\calc)$ the canonical map. Our main result is that this global Euler characteristic is \emph{universal}, at least in a homotopical sense.

To formulate the result, we note that global Euler characteristics form a category $\Eul$ where a morphism $(F,\chi)\to (F', \chi')$ is  a natural transformation $a\colon F\to F'$ such that $a\circ \chi =\chi'$. We call $a$ a natural weak equivalence if each map $a_\calc\colon F(\calc)\to  F'(\calc)$ is a weak equivalence, and we denote by $\Ho(\Eul)$ the homotopy category formed by formally inverting natural weak equivalences.\footnote{This passage to the functor category and its homotopy category usually requires a change of universe, compare \cite[\S 32]{DHKS}.}

\begin{theorem}\label{thm:main}
$(K, \chi_\univ)$ is an initial object of $\Ho(\Eul)$. 
\end{theorem}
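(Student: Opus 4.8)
The plan is to deduce Theorem~\ref{thm:main} from the ``additivization'' machinery of Section~\ref{sec:additivization} by recognizing $(K,\chi_\univ)$ as the additivization of the functor $\obj$. Concretely there are two points to establish: first, that $K$ is additive, so that $(K,\chi_\univ)$ really lies in $\Eul$; and second, that $K\simeq\obj^{\mathrm{add}}$ compatibly with the Euler characteristics. Granting that additivization is a reflective localization onto additive functors (in the homotopical sense made precise in Section~\ref{sec:additivization}), the universal property of additivization identifies a pair $(A,\chi)$ in $\Eul$ with a pair $(A,\bar\chi\colon\obj^{\mathrm{add}}\to A)$; hence $\Ho(\Eul)$ is, up to natural weak equivalence, the coslice of additive functors under $\obj^{\mathrm{add}}$, and a coslice under an object always has that object as its initial object. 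Thus $\obj^{\mathrm{add}}$, which by the second point is $(K,\chi_\univ)$, is initial in $\Ho(\Eul)$.

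For the first point one checks the four properties of Definition~\ref{defn:global_euler_characteristic} directly. Product-preservation is formal, since $wS_\bullet$ and $\Omega|{-}|$ are built levelwise and commute with finite products up to weak equivalence. The homotopical property is Waldhausen's homotopy invariance, i.e.\ that $K(s_0)\colon K(\calc)\to K(w_1\calc)$ is an equivalence (equivalently, $d_0,d_1\colon w_1\calc\to\calc$ induce homotopic equivalences on $wS_\bullet$). Pre-additivity, in the reformulated version that $(d_2,d_0)\colon S_2\calc\to\calc\times\calc$ induces a weak equivalence $K(S_2\calc)\to K(\calc\times\calc)$, is precisely Waldhausen's Additivity Theorem (together with the identity $wS_\bullet(\calc\times\calc)=wS_\bullet\calc\times wS_\bullet\calc$). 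Finally $K$ is group-like because $\pi_0K(\calc)=K_0(\calc)$ is a group by construction.

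For the second point, write $w_\bullet\calc$ for the simplicial Waldhausen category $[m]\mapsto w_m\calc$. The combinatorial identity $N_m(wS_n\calc)=\obj(w_mS_n\calc)$ identifies $|wS_\bullet\calc|$ with the diagonal realization of the bisimplicial set $(m,n)\mapsto\obj(w_mS_n\calc)$; since $\obj$ is reduced (its value on the zero Waldhausen category is a point) the $n=0$ row is constant at a point, and the resulting natural map $\obj(\calc)\to\Omega|wS_\bullet\calc|=K(\calc)$ is $\chi_\univ$. For a reduced functor $F\colon\Wald\to\psset$ the additivization is produced by iterating the passage $F\mapsto\Omega|F(w_\bullet S_\bullet-)|$ until it stabilizes; for $F=\obj$ this reads $\Omega|\,\obj(w_\bullet S_\bullet-)\,|=K$, and the Additivity Theorem shows that this single pass already yields an additive functor, so no further iteration occurs and $\obj^{\mathrm{add}}=K$ with $\chi_\univ$ as unit. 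The same description produces the comparison map explicitly: given $(A,\chi)\in\Eul$, applying $\chi$ levelwise gives $K(\calc)=\Omega|\,\obj(w_\bullet S_\bullet\calc)\,|\to\Omega|\,A(w_\bullet S_\bullet\calc)\,|$; realizing the $w$-direction first (using that $A$ is homotopical) and then looping the $S_\bullet$-direction (using that $A$ is additive, via Segal~\cite{Segal}) identifies the target, through a zigzag of natural weak equivalences, with $A$ itself, and a diagram chase shows the resulting $a\colon K\to A$ satisfies $a\circ\chi_\univ=\chi$.

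The existence of $a$ is the easy half; the substance of the theorem is uniqueness in $\Ho(\Eul)$, which is exactly the reflectivity of additivization, i.e.\ that precomposition with the unit induces a weak equivalence $\map(K,A)\xrightarrow{\ \simeq\ }\map(\obj,A)$ for every additive $A$. I expect this to be the main obstacle, and it is the heart of Section~\ref{sec:additivization}: one must show that the (possibly iterated) $S_\bullet$-loop construction converges to an additive functor and that the result is local for the additivity maps, both resting on an essential --- and in general iterated --- use of the Additivity Theorem. The remaining difficulties are organizational: checking that the levelwise maps and zigzags above assemble into a well-defined functor to $\Ho(\Eul)$, and the change-of-universe bookkeeping needed to form $\Fun(\Wald,\psset)$ and its homotopy category.
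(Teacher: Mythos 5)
Your overall architecture matches the paper's Section~\ref{sec:proof}: one checks that $K$ is additive, and one obtains the comparison map $a\colon(K,\chi_\univ)\to(A,\chi)$ as $\tau_A^{-1}\circ T\chi$, where $TF=\Omega|F(w_\bullet S_\bullet-)|$ and $\tau_A\colon A\to TA$ is an equivalence because $A$ is additive (your Segal-type delooping; the paper does this via Bousfield--Friedlander). But there is a genuine gap at exactly the point you flag as ``the main obstacle'': you assert that uniqueness is ``the reflectivity of additivization'' and that, since the Additivity Theorem makes $T\obj=K$ additive, ``no further iteration occurs and $\obj^{\mathrm{add}}=K$''. That inference is not valid as stated. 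Knowing that $K$ is additive gives you, via Lemma~\ref{lem:T_preserves_additive}, that $\tau_{T\obj}=\tau_K\colon K\to TK$ is an equivalence; but to conclude that $T$ is a reflection onto additive functors (equivalently, to run the uniqueness argument) you also need the \emph{other} coaugmentation $T\tau_{\obj}=T\chi_\univ\colon K\to TK$ to be an equivalence. These are a priori different maps, and this is the standard pitfall for coaugmented functors that are only idempotent up to homotopy. The paper supplies the missing ingredient as Lemma~\ref{lem:two_stabilizations_agree}: after pulling both loops outside, $T^2F$ is the realization of a bisimplicial object in the two $S_\bullet$- (and $w_\bullet$-) directions, and the $\Sigma_2$-flip of these directions interchanges $\tau_{TF}$ with $T\tau_F$, so the two maps differ by an automorphism of $T^2F$; hence $T\chi_\univ$ is an equivalence because $\tau_K$ is. With that in hand, uniqueness is the diagram chase you gesture at: $b$ is determined by $Tb$ via the equivalences $\tau_K,\tau_A$, and $Tb$ is determined by $Tb\circ T\chi_\univ=T\chi$. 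Your proposal never identifies this symmetry argument, and deferring to the Section~\ref{sec:additivization} machinery does not help: that machinery additivizes arbitrary functors and faces the very same idempotency issue, resolved there by the same permutation action (now of all symmetric groups, whence symmetric spectra).

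A secondary caution: identifying $\Ho(\Eul)$ with a coslice of the homotopy category of additive functors is not automatic (localization does not commute with forming coslice categories in general); the paper sidesteps this by running the existence and uniqueness arguments directly in $\Ho(\Eul)$, where $T$ and $\tau$ are defined on the nose and descend to the localization.
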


\begin{remark*}
If, in Definition \ref{defn:global_euler_characteristic}, we replace the category $\psset$ by the category of pointed sets, then $(K_0:=\pi_0 K, [\chi_{\univ}])$ is an initial object. Thus, in a precise sense, Waldhausen's functor $K$ is a homotopical analogue of $K_0$.
\end{remark*}

We can rephrase Theorem \ref{thm:main} as saying that $\chi_\univ\colon \obj(\calc)\to K(\calc)$ is the ``additivization'' of the functor $\calc\mapsto \obj(\calc)$. We also have:

\begin{theorem}\label{thm:second}
Any functor $F\colon \Wald\to \psset$ has an additivization. 
\end{theorem}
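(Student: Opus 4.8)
An \emph{additivization} of $F$ is an additive functor $F^{\mathrm{add}}$ together with a natural transformation $\eta\colon F\to F^{\mathrm{add}}$ which is initial, in $\Ho(\Fun(\Wald,\psset))$, among natural transformations from $F$ into additive functors. The plan is to realize $F^{\mathrm{add}}$ as the reflection of $F$ into the (non-full) subcategory of additive functors, and to take for it the explicit model furnished by Waldhausen's $w_\bullet$- and $S_\bullet$-constructions, which is the ``procedure'' promised in the abstract. The organizing observation is that each of the four defining properties of an additive functor $A$ is a \emph{locality} condition against an explicit map of representable functors $h_\calc := \Wald(\calc,-)_+$: by the Yoneda lemma, (ii) says $A$ is local against $h_{w_1\calc}\to h_\calc$, (iii) against $h_{\calc\times\calc}\to h_{S_2\calc}$ (both induced by the exact functors of Definition \ref{defn:global_euler_characteristic}), (i) against the maps $h_\calc\vee h_\cald\to h_{\calc\times\cald}$ induced by the two projections, and, given (iii), group-likeness (iv) is equivalent to $A$ inverting the shear map $h_{\calc\times\calc}\to h_{S_2\calc}$, $(c\rightarrowtail d\twoheadrightarrow e)\mapsto(c,d)$. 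After the customary enlargement of universe making $\Wald$ small (cf.\ the footnote to Theorem \ref{thm:main}), $\Fun(\Wald,\psset)$ with its projective model structure is combinatorial and left proper, so the left Bousfield localization at this \emph{set} of maps exists; its fibrant objects are exactly the additive objectwise-Kan functors, and the unit $F\to F^{\mathrm{add}}$ is by construction initial among maps from $F$ into additive functors. This already proves the theorem; what remains is to identify $F^{\mathrm{add}}$ with the $S_\bullet$-formula and to check that the four corrections, performed by hand, really commute past one another.

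Concretely I would proceed in four steps. (0)~Replace $F$ by the reduced functor $\calc\mapsto F(\calc)/F(0)$; every additive $A$ has $A(0)\simeq *$ (put $\cald=0$ in (i)), so this changes neither the additivization nor anything below. (1)~Form the homotopical replacement $F^{(1)}(\calc):=\lvert\,[m]\mapsto F(w_m\calc)\,\rvert$, the diagonal realization of the evident bisimplicial pointed set; a standard simplicial-homotopy (realization-lemma) argument shows $F^{(1)}$ is homotopical, still reduced, and initial under $F$ among homotopical functors. (2)~Pass to a product-preserving replacement $F^{(2)}$ under $F^{(1)}$, which exists because the inclusion of product-preserving functors preserves limits and filtered colimits, hence has a left adjoint. (3)~From the resulting homotopical, product-preserving, reduced $G:=F^{(2)}$, let $\mathbf{G}(\calc)$ be the spectrum with $n$-th space $\lvert\,[k_1,\dots,k_n]\mapsto G(S_{k_1}\cdots S_{k_n}\calc)\,\rvert$ and structure maps coming from $S_0=0$ (so the last face in degree $0$ is a point, in degree $1$ the previous space), and set $F^{\mathrm{add}}(\calc):=\Omega^\infty\mathbf{G}(\calc)$. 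Then $\Omega$, realization and each $S_n$ preserve homotopical and product-preserving functors ($S_n(\calc\times\cald)=S_n\calc\times S_n\cald$, and realization commutes with finite products up to weak equivalence); $\Omega^\infty$ of a spectrum is group-like, the axiom-(iv) multiplication agreeing with loop concatenation by Segal's argument; and $F^{\mathrm{add}}$ is pre-additive by Waldhausen's Additivity Theorem in the form valid for an arbitrary homotopical coefficient functor, namely that $\lvert\,[k]\mapsto G(S_kS_2\calc)\,\rvert\to\lvert\,[k]\mapsto G(S_k(\calc\times\calc))\,\rvert$ is a weak equivalence. The same input makes $\mathbf{G}$ an $\Omega$-spectrum above level $1$, so $F^{\mathrm{add}}(\calc)\simeq\Omega\lvert\,[k]\mapsto G(S_k\calc)\,\rvert$; for $F=\obj$ one has $G=Bw(-)$ and this unwinds to $\Omega\lvert wS_\bullet\calc\rvert=K(\calc)$, in accordance with Theorem \ref{thm:main}.

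For universality, let $A$ be additive and $F\to A$ a map in $\Ho(\Fun(\Wald,\psset))$. Since $A$ is reduced, homotopical and product-preserving, the universal properties from steps (0)--(2) yield a unique factorization $F\to G\to A$. Since $A$ is moreover pre-additive and group-like, the simplicial object $[k]\mapsto A(S_k\calc)$ satisfies the Segal condition and is group-complete, so the canonical map $A\to\Omega\lvert\,[k]\mapsto A(S_k\calc)\,\rvert=:A^{\mathrm{gp}}$ is a weak equivalence; and since $(-)^{\mathrm{gp}}$ is an idempotent localization onto the group-like objects among the homotopical, product-preserving, pre-additive functors, restriction along $G\to G^{\mathrm{gp}}=F^{\mathrm{add}}$ induces a bijection on homotopy classes of natural transformations into $A$. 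Composing the two factorizations gives the required unique factorization of $F\to A$ through $F^{\mathrm{add}}$, so $F\to F^{\mathrm{add}}$ is an additivization.

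I expect the main obstacle to be step (3): proving the Additivity Theorem for an arbitrary homotopical coefficient functor, so that iterating $S_\bullet$ simultaneously forces pre-additivity and deloops. Waldhausen's own proof for $\lvert wS_\bullet-\rvert$ uses only homotopy-invariance of the coefficients, so it should adapt, but making this precise -- together with verifying that the four corrections do not destroy one another's output -- is the technical heart. Step (2), product-preservation, is a minor but genuinely necessary wrinkle: it is the single place where slightly more than ``one can define an $S_\bullet$-construction'' enters, and it is most transparently handled through the Bousfield-localization reformulation of the first paragraph.
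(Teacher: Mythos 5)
Your first paragraph is a correct proof of the existence statement, and it is a genuinely different route from the paper's. The translation of the four axioms into locality conditions against an explicit set of maps between (de)representable functors $h_\calc=\Wald(\calc,-)_+$ is accurate (including the observation that, in the presence of pre-additivity, group-likeness is locality against the shear map $(d_2,d_1)$), and after the same universe enlargement the paper already invokes, the projective model structure on $\Fun(\Wald,\psset)$ is combinatorial and left proper, so the left Bousfield localization exists and its fibrant replacement $F\to F^{\mathrm{add}}$ is initial among maps from $F$ to additive functors. What this buys is brevity and conceptual clarity; what it loses is the explicit model, which the paper needs elsewhere (the proof of Theorem~\ref{thm:third} computes with the formula for $P$, and the infinite-loop-space structure of additive functors is read off from it). The paper instead constructs the additivization directly as $PF=\Omega^\infty\bfP F$ for a \emph{symmetric} spectrum built from the iterated $S_\bullet$-construction, and proves universality by showing $P$ is homotopy idempotent.

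Where your explicit steps (1)--(3) diverge from the paper, there are genuine gaps, so you should lean on the localization argument rather than on them. First, the claim that $\calc\mapsto\lvert F(w_\bullet\calc)\rvert$ is \emph{initial} under $F$ among homotopical functors is not justified by the realization lemma (which only gives that it \emph{is} homotopical); likewise the adjoint-functor-theorem argument in step (2) is a strict 1-categorical reflection applied to a homotopical condition and does not produce what you need. Second, and more importantly, step (3) rests on ``Waldhausen's Additivity Theorem for an arbitrary homotopical coefficient functor $G$'', i.e.\ that one application of $S_\bullet$ already splits $S_2$; this is exactly the statement the paper is careful \emph{not} to assume. Instead it uses only the stable, spectrum-level additivity of \cite[1.3.5]{Wald}: a cofiber sequence of exact functors yields $F(g)_*=F(f)_*+F(h)_*$ after passing to $\pi_{*+1}\lvert F(S_\bullet\cald)\rvert$ (an inspection of the $2$-skeleton), which applied to $s_1d_2\rightarrowtail\id\twoheadrightarrow s_0d_0$ on the extension category gives additivity of $\bfP F$ for \emph{every} $F$ -- and, via $\calc\times\cald\simeq E(\calc,\calc\times\cald,\cald)$, product-preservation for free, so your step (2) is not needed at all. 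The actual technical heart of the paper is not additivity but the idempotency statement $P\pi_F\simeq\mathrm{id}$: since $\bfP F$ need not be semistable, the naive $\Omega^\infty$ may compute the wrong thing, and the paper has to route the comparison through Shipley's detection functor. Your construction, which implicitly uses the naive $\Omega^\infty$ (justified only by the unproven coefficient-level additivity theorem), does not address this point.
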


Finally, we denote by $[K,A]$ the morphisms from $K$ to $A$ in the homotopy category of the functor category $\Fun(\Wald,\psset)$, and let $\fpset$ be the Waldhausen category of finite pointed sets.

\begin{theorem}\label{thm:third}
If $A$ is additive, then the canonical map
\[[K, A] \to \pi_0 A(\fpset), \quad a \mapsto a_*(\chi_\univ(S^0))\]
is a bijection.
\end{theorem}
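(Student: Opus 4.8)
The plan is to factor the map in question as a composite of two bijections. Let $\chi_\univ^*\colon[K,A]\to[\obj,A]$ denote precomposition with $\chi_\univ\colon\obj\to K$, and let $\Psi\colon[\obj,A]\to\pi_0 A(\fpset)$ send the class of a natural transformation $\chi\colon\obj\to A$ to the class of $\chi_\fpset(S^0)\in A(\fpset)$. Since $(a\circ\chi_\univ)_\fpset(S^0)=a_*(\chi_\univ(S^0))$, the composite $\Psi\circ\chi_\univ^*$ is exactly the map of the theorem, so it suffices to prove that $\Psi$ and $\chi_\univ^*$ are bijections. Here and below I work in the projective model structure on $\Fun(\Wald,\psset)$, which exists since $\psset$ is combinatorial, and $[-,-]$ denotes its homotopy category.

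That $\Psi$ is a bijection rests on the observation that $\obj$ is \emph{corepresented by $\fpset$}: an exact functor $\fpset\to\calc$ is determined, and freely so, by the object it sends $S^0$ to, since $\fpset$ is generated by $S^0$ under iterated wedge sums (iterated pushouts along the cofibration $\ast\rightarrowtail S^0$) together with the unique maps into the zero object, all of which exact functors preserve. Hence $\obj\cong\Wald(\fpset,-)$ as set-valued functors, pointed by the zero functor $\fpset\to\mathbf 0\to\calc$, where $\mathbf 0$ is the zero Waldhausen category. The underlying unpointed discrete functor $\underline{\Wald(\fpset,-)}$ is projectively cofibrant, and $\obj$ is the cofibre of the levelwise cofibration $\underline{\Wald(\mathbf 0,-)}_+\to\underline{\Wald(\fpset,-)}_+$, hence a homotopy cofibre. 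Taking derived mapping spaces into $A$ and evaluating the two outer terms by the Yoneda lemma yields a homotopy fibre sequence
\[ \operatorname{Map}(\obj,A)\longrightarrow A(\fpset)\longrightarrow A(\mathbf 0), \]
and $A(\mathbf 0)\simeq\ast$ because $A$ is product-preserving and $\mathbf 0\times\mathbf 0=\mathbf 0$. So $\operatorname{Map}(\obj,A)\simeq A(\fpset)$, and unwinding the Yoneda identification the resulting bijection on $\pi_0$ is $\Psi$.

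For $\chi_\univ^*$ I invoke Theorem \ref{thm:main}. Since $A$ is additive, \emph{every} natural transformation $\chi\colon\obj\to A$ makes $(A,\chi)$ a global Euler characteristic, so $\Eul$ is precisely the full subcategory of the coslice $\obj\backslash\Fun(\Wald,\psset)$ on the pairs $(\obj\to F)$ with $F$ additive; this subcategory is closed under weak equivalences, additivity being a homotopy-invariant condition, so $\Ho(\Eul)$ is the corresponding full subcategory of the homotopy category of the coslice. In the coslice model structure the mapping space $\operatorname{Map}_{\obj\backslash\Fun}\bigl((\obj\xrightarrow{\chi_\univ}K),(\obj\xrightarrow{\chi}A)\bigr)$ is the homotopy fibre of $\chi_\univ^*\colon\operatorname{Map}(K,A)\to\operatorname{Map}(\obj,A)$ over the point $\chi$. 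By Theorem \ref{thm:main}, $(K,\chi_\univ)$ is initial in $\Ho(\Eul)$, so each of these homotopy fibres has trivial $\pi_0$, and they occur over every point of $\operatorname{Map}(\obj,A)$ because every $\chi$ is allowed. A map all of whose homotopy fibres are non-empty and connected is a bijection on $\pi_0$; hence $\chi_\univ^*\colon[K,A]\to[\obj,A]$ is a bijection, and composing with $\Psi$ completes the proof.

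The first two steps are routine once the corepresentability of $\obj$ is noticed. I expect the real work to be in the third step: making rigorous the passage between the homotopy category of the coslice category $\Eul$ and mapping spaces in $\Ho(\Fun(\Wald,\psset))$, i.e.\ the homotopy-fibre sequence for coslice mapping spaces, especially since $\obj$ is presumably not itself cofibrant. This becomes transparent if, as one would expect from \S\ref{sec:additivization} and Theorem \ref{thm:second}, the additivization is realised by a left Bousfield localization whose local objects are precisely the additive functors and for which $\chi_\univ\colon\obj\to K$ is the localization map: then $\chi_\univ^*\colon[K,A]\cong[\obj,A]$ is immediate from the universal property of localization.
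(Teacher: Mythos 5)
Your factorization through $[\obj,A]$ founders on the claim that $\obj\cong\Wald(\fpset,-)$ as set-valued functors. An exact functor in Waldhausen's sense preserves pushouts along cofibrations only up to canonical isomorphism, so an exact functor $\fpset\to\calc$ is \emph{not} determined by its value on $S^0$: once $F(S^0)=c$ is fixed, $F(\{*,a,b\})$ may be any object equipped with the structure of a wedge $c\vee c$, and different choices yield different (naturally isomorphic, but unequal) exact functors. Hence the evaluation $\Wald(\fpset,\calc)\to\obj(\calc)$, $f\mapsto f(S^0)$, is not injective; it is an equivalence of \emph{categories} but not a bijection of sets --- this is exactly the caveat the paper's proof flags (``there may be different choices of the wedge sum''). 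Consequently your Yoneda computation identifies $\operatorname{Map}(\Wald(\fpset,-),A)$ with $A(\fpset)$, not $\operatorname{Map}(\obj,A)$, and the bijectivity of $\Psi$ is unproved. The paper's workaround is the essential content you are missing: it factors through the representable functor $R_{\fpset}=\Wald(\fpset,-)$ itself, shows only that $\vert R_{\fpset}(w_\bullet\calc)\vert\simeq\vert w\calc\vert$ (the equivalence of categories suffices for this), and deduces that $R_{\fpset}$ and $\obj$ have the \emph{same additivization} $K$; the universal property of the additivization $R_{\fpset}\to K$ then replaces both of your steps.

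Your third step also needs more care than you grant it. Theorem \ref{thm:main} controls morphisms in $\Ho(\Eul)$ only into pairs $(A,\chi)$ where $\chi$ is a strict natural transformation $\obj\to A$; since $\obj$ is not projectively cofibrant, a point of the derived mapping space $\operatorname{Map}(\obj,A)$ need not arise from such a $\chi$, so the assertion that the connected homotopy fibres ``occur over every point'' does not follow from ``every $\chi$ is allowed''. Likewise the identification of $\hom_{\Ho(\Eul)}$ (a localization of a category with strict morphisms) with $\pi_0$ of a homotopy fibre of derived mapping spaces is itself a nontrivial rectification statement. The paper avoids both issues by constructing the inverse $\pi_0 A(\fpset)\to[K,A]$ explicitly, sending $x$ to the extension of $f\mapsto f_*(x)$ along the additivization $R_{\fpset}\to K$, and verifying the two composites directly.
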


\begin{acknowledgement}
I am grateful to Georgios Raptis for his numerous comments on an earlier version of this draft. In particular \ref{strengthening} below is due to him.
\end{acknowledgement}

\section{Proof of Theorem \ref{thm:main}}\label{sec:proof}

Denote by $S_n\calc$ Waldhausen's category of $n$-fold extensions in $\calc$, and by $w_n\calc\subset \Fun([n],\calc)$ the full Waldhausen subcategory on strings of weak equivalences. Both constructions define, for varying $n$, simplicial objects in the category $\Wald$.

We call a functor $F$ \emph{reduced} if $F(*)=*$. For such an $F$ we let
\[TF(\calc):= \Omega\vert F(w_\bullet S_\bullet\calc)\vert\]
and 
\[\tau_F\colon F(\calc)\to TF(\calc)\]
the map which includes $\calc$ as the 0-skeleton of $w_\bullet \calc$ and which is adjoint to the inclusion of the 1-skeleton of $S_\bullet\calc$ (followed by the map into the Kan replacement). This generalizes Waldhausen's construction for the functor $\calc\mapsto \obj(\calc)$, in which case $T\obj=K$ and 
\[\chi_{\univ}:=\tau_{\obj} \colon \obj(\calc)\to K(\calc)\]
is the canonical map from above.

Waldhausen concludes from his additivity theorem that $\tau_K\colon K\to TK$ is a natural weak equivalence. His argument generalizes to show:

\begin{lemma}\label{lem:T_preserves_additive}
If $F$ is additive with $F(*)=*$ then $\tau_F\colon F\to TF$ is a natural weak equivalence.
\end{lemma}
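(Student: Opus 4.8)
The plan is to factor $\tau_F$ (up to homotopy) as a composite
\[
F(\calc)\;\xrightarrow{\ a\ }\;\vert F(w_\bullet\calc)\vert\;\xrightarrow{\ b\ }\;\Omega\vert F(w_\bullet S_\bullet\calc)\vert = TF(\calc),
\]
where $a$ is the inclusion of the $0$-skeleton in the $w$-direction and $b$ is adjoint to the map $\Sigma\vert F(w_\bullet\calc)\vert\to\vert F(w_\bullet S_\bullet\calc)\vert$ induced by the $1$-skeleton in the $S_\bullet$-direction — here one uses $F(w_m S_0\calc)=*$, so that the realized $1$-skeleton is a suspension of $\vert F(w_\bullet S_1\calc)\vert=\vert F(w_\bullet\calc)\vert$. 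I would then prove that $a$ and $b$ are each weak equivalences.

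That $a$ is a weak equivalence is a formal consequence of property (ii). Every structure map of the simplicial Waldhausen category $w_\bullet\calc$ (in particular $s_0\colon\calc\to w_1\calc$) admits a one-sided inverse that is weakly equivalent, as an exact functor, to the identity: one either inserts an identity morphism or composes two adjacent weak equivalences. Hence (ii) forces every face and degeneracy of the simplicial space $[m]\mapsto F(w_m\calc)$ to be a weak equivalence, so that simplicial space is homotopy constant and the edge map $a$ from its $0$-th space to its realization is an equivalence.

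For $b$ I would invoke Segal's recognition and delooping theorems \cite{Segal} applied to the simplicial space $Y_\bullet$ with $Y_n:=\vert F(w_\bullet S_n\calc)\vert$; since taking diagonals is associative one has $\vert Y_\bullet\vert=\vert F(w_\bullet S_\bullet\calc)\vert$, and $b$ is the canonical map $Y_1\to\Omega\vert Y_\bullet\vert$. Three points must be checked. \emph{(a)} $Y_0\simeq *$: indeed $S_0\calc=*$, each $w_m*=*$, and $F(*)=*$. \emph{(b)} The Segal maps $Y_n\to Y_1^{\times n}$ are weak equivalences. For this I note that, for each fixed $m$, the functor $\calc\mapsto F(w_m\calc)$ is product-preserving and pre-additive: $w_m$ commutes with finite products and with the extension-category construction, so $w_m S_2\calc\cong S_2 w_m\calc$ and properties (i), (iii) apply. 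Running Waldhausen's additivity bootstrap — iterating the $S_2$-case exactly as in \cite[\S1.4]{Wald}, an argument that uses nothing about $\Wald$ beyond finite products, the $S_\bullet$-construction, and the cofiber-sequence-of-functors form of pre-additivity — then shows $F(w_m S_n\calc)\to F(w_m\calc)^{\times n}$ is a weak equivalence; taking diagonals over $m$ yields the claim. \emph{(c)} $Y_1$ is group-like: under the natural equivalence $a$ the Segal $H$-space structure on $Y_1$ corresponds to the $H$-space structure on $F(\calc)$ of property (iv) (both are induced by $d_2,d_0,d_1\colon S_2\to S_1$), which is group-like by hypothesis. Granting \emph{(a)}--\emph{(c)}, Segal's theorem identifies $\Omega\vert Y_\bullet\vert$ with the group completion of $Y_1$, hence with $Y_1$ itself; so $b$, and therefore $\tau_F=b\circ a$, is a weak equivalence.

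The step I expect to be the real work is \emph{(b)}: reproving, for an additive functor, the equivalence $S_n\calc\simeq\calc^{\times n}$ by the bootstrap from the $S_2$-case. This is precisely the point at which one must verify the paper's slogan — that Waldhausen's additivity argument never covertly uses more structure of $\Wald$ than pre-additivity, product-preservation, and the manipulation of iterated extension categories. The homotopical and group-like properties play no role there; they enter only through $a$ and through group-likeness of $Y_1$.
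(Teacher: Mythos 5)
Your proof is correct, but it takes a genuinely different route from the paper at the key step. The paper factors $\tau_F$ the other way around, as $F(\calc)\to\Omega\vert F(S_\bullet\calc)\vert\to\Omega\vert F(w_\bullet S_\bullet\calc)\vert$, disposes of the $w_\bullet$-direction by the homotopical axiom plus the realization lemma exactly as you do for your map $a$, and then handles the $S_\bullet$-direction not via Segal's delooping theorem but by exhibiting the square with corners $F(S_1\calc)$, $\vert F(S_{1+\bullet}\calc)\vert$, $F(S_0\calc)$, $\vert F(S_\bullet\calc)\vert$ as a levelwise homotopy pullback (using $F(S_{1+n}\calc)\simeq F(\calc)\times F(S_n\calc)$, i.e.\ the same bootstrap you need in step \emph{(b)}, for which the paper also simply cites \cite[p.~344]{Wald} rather than reproving it) and then applying the Bousfield--Friedlander theorem \cite[B.4]{BF}, with the group-like axiom entering to verify the $\pi_*$-Kan condition. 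So the two arguments consume identical inputs --- the Segal-map equivalences from pre-additivity and products, and group-likeness of $F(\calc)$ --- and differ only in where the hard ``realization of levelwise fibration sequences'' content is outsourced: you put it into Segal's Proposition 1.5, the paper puts it into Bousfield--Friedlander (which is essentially what Segal's proof uses anyway; the paper's introductory remark already cites \cite{Segal} for the infinite-loop structure, so this is in the spirit of the text). Your version is arguably more conceptual and makes the delooping statement explicit; the paper's is more self-contained at the level of references actually verified. Two small points you should tidy up: check that the simplicial-space hypotheses of Segal's theorem (reduced, good/proper) hold in the diagonal-of-bisimplicial-sets setting, which they do here since $Y_0=*$ on the nose; and say a word identifying $b\circ a$ with $\tau_F$ up to homotopy (the paper faces the analogous identification and cites \cite[1.5]{Wald} for it).
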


\begin{proof}
We factor the map $\tau_F$ as a composite
\[F(\calc) \to \Omega\vert F(S_\bullet \calc)\vert \to \Omega\vert F(w_\bullet S_\bullet\calc)\vert.\]
Now the total degeneracy $F(\calc)\to F(w_n\calc)$ is a weak equivalence for any $n$ and any $\calc$, since $F$ is homotopical; it follows from the realization lemma that  the second map of the composite is a weak equivalence.

As for the  first map, we consider the diagram
\begin{equation}\label{eq:fund_square}
\xymatrix@C=0.5ex{
\bigl(F(\calc)=\bigr) & F(S_1\calc) \ar[rrrr] \ar[d]^{d_0} 
  &&&& \vert F(S_{1+\bullet}\calc)\vert \ar[d]^{d_0} & \bigl(\simeq *\bigr)
\\
\bigl(*=\bigr) &  F(S_0\calc) \ar[rrrr] &&&& \vert F(S_\bullet \calc)\vert
}
\end{equation}
where the horizontal maps are the inclusions of the $0$-skeleta. (The equivalence in the right upper corner stems from  that fact that for any simplicial space $X_\bullet$ we have $\vert X_{1+\bullet}\vert \simeq X_0$.) Waldhausen \cite[1.5]{Wald} has pointed out that if $F(*)=*$, then the map adjoint to the 1-skeletal inclusion is homotopic to the canonical map from the upper left corner in the diagram to the homotopy pull-back of the remaining square. Thus we need to show that \eqref{eq:fund_square} is a homotopy pull-back.

Now $F$ is additive so $F(S_{1+n}\calc)\simeq F(\calc)\times F(S_n\calc)$ (compare again \cite[p.~344]{Wald}) so the diagram is a homotopy pull-back in each simplicial degree. We wish to apply a theorem of Bousfield--Friedlander \cite[Theorem B.4]{BF} to conclude that the square remains a homotopy pull-back after realization. 

To verify the assumptions of this theorem, we note that by the group-like condition, 
\[d_0\colon \pi_0 F(S_{1+\bullet} \calc) \to \pi_0 F(S_\bullet\calc)\]
is a surjective map of simplicial groups and hence a Kan fibration. The same argument shows that for each $t\geq 1$, the map $[S^t, F(S_\bullet \calc)]\to \pi_0 F(S_\bullet \calc)$ is a Kan fibration. As group-like $H$-spaces are simple, \cite[B.3.1]{BF} applies to verify the $\pi_*$-Kan condition. Thus the assumption of the theorem of Bousfield--Friedlander are satisfied and its conclusion completes the proof of Lemma \ref{lem:T_preserves_additive}.
\end{proof}


To extend the definition of $T$ to non-reduced functors, we redefine $TF:=T(F/F(*))$ and $\tau_F\colon F\to F/F(*)\to T(F/F(*))$ the obvious composite. Clearly $T$ defines an endofunctor of the functor category $\Fun(\Wald,\psset)$  and $\tau$ a natural transformation from the identity functor to $T$. By the realization lemma $T$ preserves natural weak equivalences so $(T,\tau)$ extend to a functor and a natural transformation on the homotopy category (still denoted by the same letters).

Note that there are two, possibly different canonical natural transformations $K\to TK$, namely
\[\tau_K\colon K\to TK \quad\mathrm{and}\quad T\chi_\univ \colon T\obj = K \to TK,\]
the first of which is a natural weak equivalence by Lemma \ref{lem:T_preserves_additive}. Next we show that the second map is also a weak equivalence. This is a direct consequence of the following result, applied to $F=\obj$:

\begin{lemma}\label{lem:two_stabilizations_agree}
For any $F$, the two morphisms
\[\tau_{TF}, T\tau_F\colon TF\to T^2F\]
in the homotopy category differ by an automorphism of $T^2 F$.
\end{lemma}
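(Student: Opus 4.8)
The plan is to exploit the bisimplicial symmetry inherent in the two-fold iteration $T^2 F(\calc) = \Omega^2 |F(w_\bullet S_\bullet w_\bullet S_\bullet \calc)|$ (after reducing $F$, which we may do since $T$ factors through $F \mapsto F/F(*)$). Both $\tau_{TF}$ and $T\tau_F$ arise from including $\calc$ into one or the other of the two ``copies'' of the $w_\bullet S_\bullet$-construction, and the claim is essentially that these two copies are interchanged by a symmetry of the bisimplicial object. First I would make this precise: unwinding the definition of $\tau$, the map $\tau_{TF} \colon TF \to T(TF) = T^2 F$ uses the skeletal inclusions in the \emph{outer} pair of simplicial directions, whereas $T\tau_F \colon T F \to T^2 F$ applies the functor $T$ (i.e., the outer $w_\bullet S_\bullet$) to the map $\tau_F$, which lives in the \emph{inner} pair of directions. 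So at the level of the underlying quadruply-indexed object $F(w_p S_q w_r S_s \calc)$, the relevant structure maps for the two transformations are obtained from one another by the involution swapping the index pair $(p,q)$ with $(r,s)$.

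Next I would produce this involution honestly. The assignment $\calc \mapsto w_\bullet S_\bullet \calc$ is a functor from $\Wald$ to bisimplicial objects in $\Wald$; iterating gives a functor to $(\Delta^{op})^{\times 4}$-indexed objects, and the key point is that the two consecutive applications of $(w_\bullet, S_\bullet)$ are formally the same construction, so precomposing with the ``block transposition'' automorphism of $\Delta^{\times 4}$ that swaps the first two coordinates with the last two yields a natural isomorphism of functors $w_\bullet S_\bullet w_\bullet S_\bullet \calc \cong w_\bullet S_\bullet w_\bullet S_\bullet \calc$. (Here one uses that $S_\bullet$ and $w_\bullet$ each commute with arbitrary limits-of-categories in the relevant sense, so that, e.g., $w_p S_q (w_r S_s \calc) \cong w_r S_s (w_p S_q \calc)$ compatibly in all four variables — this is the standard ``$S_\bullet$ and $w_\bullet$ commute'' observation already implicit in Waldhausen.) Applying $F$ and realizing, this induces an automorphism $\sigma$ of $|F(w_\bullet S_\bullet w_\bullet S_\bullet \calc)|$, and one checks that $\Omega^2 \sigma$ is an automorphism of $T^2 F$; since the block transposition carries the outer skeletal inclusions to the inner ones, $\sigma$ intertwines the bisimplicial map underlying $\tau_{TF}$ with the one underlying $T\tau_F$. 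A small point to handle with care is the looping: the transposition also swaps the two loop coordinates, but up to a sign (itself realized by an automorphism of $S^1 \wedge S^1$, the flip) this is harmless, and one folds that flip into the final automorphism of $T^2 F$.

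The main obstacle I anticipate is purely bookkeeping rather than conceptual: matching up, on the nose, the \emph{skeletal inclusion maps} that define $\tau$ with the structure maps of the bisimplicial object under the transposition isomorphism, and verifying that the diagram comparing $\tau_{TF}$ and $T\tau_F \circ \sigma^{-1}$ (or the analogous diagram after looping) actually commutes — not merely up to homotopy, which would not suffice for a statement about the homotopy category phrased via a genuine automorphism, but strictly, or at least coherently enough to descend. I would organize this by first checking it one simplicial direction at a time: the inclusion of $\calc$ as the $0$-skeleton of $w_\bullet\calc$ and the adjoint-to-$1$-skeleton inclusion into $S_\bullet\calc$ are both natural in $\calc$, hence commute with applying any functorial construction in the other three indices, and the transposition is built precisely to exchange the roles of ``outer pair'' and ``inner pair''. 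Once the strict commutativity of the relevant square of bisimplicial objects is in hand, realizing, looping twice, and passing to the homotopy category gives the desired conclusion, with the advertised automorphism of $T^2 F$ being $\Omega^2\sigma$ composed with the loop-coordinate flip.
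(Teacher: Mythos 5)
Your central idea --- that the asserted automorphism is the $\Sigma_2$-swap of the two copies of the $w_\bullet S_\bullet$-construction (together with the flip of loop coordinates), which interchanges the ``outer'' skeletal inclusions defining $\tau_{TF}$ with the ``inner'' ones defining $T\tau_F$ --- is exactly the idea of the paper's proof. But there is a genuine gap at the very first step: you write $T^2F(\calc)=\Omega^2\vert F(w_\bullet S_\bullet w_\bullet S_\bullet\calc)\vert$ as if this were the definition. It is not. By definition
\[T^2F(\calc)=\Omega\bigl\vert\,\Omega\vert F(w_\bullet S_\bullet w_* S_*\calc)\vert_\bullet\,\bigr\vert_*,\]
with the inner loop space nested \emph{inside} the outer realization. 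This object carries no visible $\Sigma_2$-symmetry: the asymmetry between the two iterations sits precisely in that nesting. The real content of the lemma is therefore the identification of this nested object with the symmetric model $\Omega^2\vert F(w_\bullet S_\bullet w_* S_*\calc)\vert_{\bullet,*}$, i.e.\ commuting the inner $\Omega$ past the outer realization. That step is not formal: $\Omega$ does not commute with realization in general, and the paper invokes the Bousfield--Friedlander theorem, using that the simplicial space $\vert F(S_\bullet S_*\calc)\vert_\bullet$ is degreewise connected (because $S_0\cald=*$), to produce the canonical weak equivalence. Your proposal never supplies this, so the swap automorphism you construct acts on an object that you have not yet identified with $T^2F$.

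Once that identification is in place, the rest of your argument goes through, and in fact more easily than you fear: the statement lives in the homotopy category, so you do not need the comparison square to commute strictly --- a natural weak equivalence intertwining the two transformations up to natural homotopy suffices, and your worry about strictness is a red herring. Your observation that $w_p S_q$ and $w_r S_s$ commute, giving the transposition isomorphism of the quadruple simplicial object, is correct and is implicitly the same symmetry the paper uses when it permutes $\bullet$ with $*$.
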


\begin{proof}
We may assume that $F$ is reduced. There is a canonical weak equivalence
\[T^2F(\calc)= \Omega\vert \Omega \vert F(w_\bullet S_\bullet w_* S_*\calc)\vert_\bullet \vert_*\simeq \Omega^2 \vert F(w_\bullet S_\bullet w_* S_*\calc)\vert_{\bullet,*} \]
which ``pulls $\Omega$ out''. (This follows again from the Bousfield--Friedlander theorem, using that the simplicial space $\vert F(S_\bullet S_*\calc)\vert_\bullet$ is degree-wise connected \cite[p.~120]{BF}.) In the latter model for $T^2 F$ the asserted automorphism is given by the $\Sigma_2$-action which permutes $\bullet$ with $*$ and permutes the loop coordinates.
\end{proof}

To prove Theorem \ref{thm:main}, consider the following diagram in $\Ho(\Eul)$:
\begin{equation}\label{eq:main_diagram}
\xymatrix{
 && (K,\chi_\univ) \ar[d]^{\tau_K}_\cong \ar@{.>}[rr]_b
 && (A,\chi) \ar[d]^{\tau_A}_\cong
\\
(K, \chi_\univ) \ar[rr]^{T\chi_\univ}_\cong \ar@/_3ex/[rrrr]_{Ta}
 && (TK, \tau\circ\chi_\univ)  \ar@{.>}[rr]^{Tb}
 && (TA, \tau\circ\chi)
}
\end{equation}
The middle and the right vertical arrows are isomorphisms by Lemma \ref{lem:T_preserves_additive} and we just argued that $T\chi_\univ$ is so, too. But then both existence and uniqueness of $b$ follow from a simple diagram chase. 

\begin{remark*}
As a consequence of Theorem \ref{thm:main}, the two transformations $\tau_K$ and $T\chi_\univ$ from above actually agree (in the homotopy category). 
\end{remark*}

\section{The additivization functor}\label{sec:additivization}

In this section we prove Theorem \ref{thm:second}, by constructing a homotopy idempotent additivization functor $P$ on $\Fun(\Wald, \psset)$. Basically the strategy is to iterate the $S_\bullet$-construction to obtain a spectrum, of which we take  the infinite loop space. However, in showing that $P$ is homotopy idempotent, the permutation action of the symmetric groups on the iterated $S_\bullet$-construction comes into play. The theory of symmetric spectra is just made to deal with this issue and we will use some basic definitions and results of this theory as described for instance in \cite{Shipley}. 

For a reduced functor $F$ define a symmetric spectrum $\bfP F(\calc)$ with $n$-th space
\[\bfP F_n(\calc):= \vert F(w_\bullet S_\bullet^{(n)}\calc)\vert,\]
using the iterated $S_\bullet$-construction, with $\Sigma_n$ acting by permuting the $S_\bullet$-directions, and the $(n+1)$-many  structure maps $\bfP F_n(\calc) \wedge S^1\to \bfP F_{n+1}(\calc)$  are given by the inclusions of the respective $1$-skeleta of $(n+1)$ many simplicial directions. We also define
\[PF(\calc):=\Omega^\infty \bfP F(\calc)
\quad \mathrm{and} \quad
\pi_F(\calc) \colon F(\calc)\to PF(\calc)\]
the inclusion into the 0-skeleton of the $w_\bullet$ direction, followed by the inclusion of the $0$-th space of the spectrum into its infinite loop space.

\begin{explanation*} 
There is a notion of ``stable equivalence'' between symmetric spectra; the infinite-loop space functor $\Omega^\infty$ is abstractly defined as the right-derived functor (with respect to the class of stable equivalences) of the functor taking a spectrum to its 0-th space. The space $PF(\calc)=\Omega^\infty \bfP F(\calc)$ might however \emph{not} be equivalent to the space obtained by the naive  formula 
\[\Omega^\infty_\naive\bfP F(\calc):=\hocolim \bigl(F(\calc)\xrightarrow \tau \Omega \vert F(S_\bullet\calc)\vert  \xrightarrow{\tau} \Omega^2 \vert F(S_\bullet^{(2)}\calc)\vert  \xrightarrow \tau\dots \bigr);\]
indeed the functor $\Omega^\infty_\naive$ does in general not send stable equivalences of spectra to weak equivalences of spaces.
\end{explanation*}

\begin{remark*}
If the symmetric spectrum $\bfP F(\calc)$ is ``semistable'' then the naive formula will do (and there is no particular reason to use symmetric spectra at this place), but it is not clear whether this is always the case.
\end{remark*}

As in the last section, $P$ defines an endofunctor of the category $\Fun(\Wald,\psset)$ (applying $P$ to $F/F(*)$ if $F$ is not reduced) and passes to the homotopy category. Moreover $\pi$ is a natural transformation from the identity functor to $P$. By Lemma \ref{lem:T_preserves_additive} and the realization lemma, $\bfP F$ is an $\Omega$-spectrum whenever $F$ is additive (maybe up to replacing the spaces in each spectrum level by Kan complexes). In this case, $\bfP F$ is semistable so we conclude, again from Lemma \ref{lem:T_preserves_additive} that $\pi_F$ is a natural weak equivalence.

The following result generalizes \cite[1.3.5]{Wald}.

\begin{lemma}
For any $F$, the functor $PF$ is additive.
\end{lemma}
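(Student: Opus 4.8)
The plan is to verify the four conditions of Definition \ref{defn:global_euler_characteristic} for $PF$, working at the level of the symmetric spectrum $\bfP F$ wherever possible and only passing to $\Omega^\infty$ at the end. We may assume $F$ is reduced. The key observation underlying all four verifications is that each defining property we need to check is detected on the underlying simplicial $\Wald$-objects of the iterated $S_\bullet$-construction, and that $F$ is applied levelwise, so no hypothesis on $F$ itself is required — this is exactly the ``almost no structure is used'' point stressed in the introduction, and it generalizes Waldhausen's \cite[1.3.5]{Wald} from $F=\obj$ to arbitrary $F$.

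First I would treat \emph{product-preservation} and \emph{homotopicality}, which are formal. For products: since $w_\bullet S_\bullet^{(n)}(\calc\times\cald) \cong w_\bullet S_\bullet^{(n)}\calc \times w_\bullet S_\bullet^{(n)}\cald$ as simplicial Waldhausen categories (the $S_\bullet$- and $w_\bullet$-constructions are defined objectwise and preserve products), the canonical map $\bfP F(\calc\times\cald)\to \bfP F(\calc)\times \bfP F(\cald)$ is a levelwise map of symmetric spectra which after realization is the comparison map for a product of simplicial spaces, hence a levelwise weak equivalence; applying $\Omega^\infty$, which preserves products up to equivalence, gives condition (i). For homotopicality: the total degeneracy $\calc\to w_1\calc$ induces $w_\bullet S_\bullet^{(n)}\calc \to w_\bullet S_\bullet^{(n)}(w_1\calc)$, and by the same bisimplicial-realization/extra-degeneracy argument used in the proof of Lemma \ref{lem:T_preserves_additive} (the map $X \to w_\bullet X$ is a weak equivalence for any $X$, here applied in the extra $w$-direction), this is a levelwise stable equivalence $\bfP F(\calc)\xrightarrow{\simeq}\bfP F(w_1\calc)$; apply $\Omega^\infty$ to get (ii).

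Next, \emph{pre-additivity}. Here I would invoke the additivity theorem in the form already available: for each $n$, Waldhausen's additivity gives a levelwise equivalence $\vert F(w_\bullet S_\bullet^{(n)} S_2\calc)\vert \simeq \vert F(w_\bullet S_\bullet^{(n)}\calc)\vert \times \vert F(w_\bullet S_\bullet^{(n)}\calc)\vert$ induced by $(d_2,d_0)$, \emph{because} applying one more $S_\bullet$ (i.e. passing from $\bfP F_n$ to a space where additivity in the outer $S_2$-variable is visible) makes the relevant square a levelwise homotopy pullback with contractible off-diagonal corner, exactly as in diagram \eqref{eq:fund_square}; one more application of the Bousfield--Friedlander theorem \cite[Theorem B.4]{BF}, whose hypotheses are met since all the simplicial spaces in the iterated construction beyond the first $S_\bullet$-coordinate are degreewise connected (as in the proof of Lemma \ref{lem:two_stabilizations_agree}), promotes this to a stable equivalence $\bfP F(S_2\calc)\to \bfP F(\calc)\times\bfP F(\calc)$; then $\Omega^\infty$ yields (iii). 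Finally, \emph{group-likeness}: $PF(\calc)=\Omega^\infty\bfP F(\calc)$ is an infinite loop space by construction, hence a group-like $H$-space, and one checks routinely that the $H$-space structure of Definition \ref{defn:global_euler_characteristic}(iv), defined via $(d_2,d_0)$ and $d_1$ on $S_2$, agrees with the infinite-loop addition — this follows from the preceding paragraph together with the standard fact that on the group completion the $S_2$-sum is the loop sum; so (iv) holds.

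The main obstacle I anticipate is the pre-additivity step: one must be careful that the additivity equivalence is compatible with \emph{all} the simplicial/permutation structure so that it really assembles to a map of symmetric spectra before one is allowed to apply $\Omega^\infty$ (recall from the explanation after the definition of $P$ that $\Omega^\infty$ does not commute with naive colimits of spectra, so the equivalence must genuinely be a stable equivalence of spectra, not merely a compatible family of space-level equivalences obtained one level at a time). Concretely, the point is that $(d_2,d_0)\colon S_2\calc\to\calc\times\calc$ is a map in $\Wald$ and therefore induces a genuine map of symmetric spectra $\bfP F(S_2\calc)\to \bfP F(\calc\times\calc)\simeq \bfP F(\calc)\times\bfP F(\calc)$, which is a stable equivalence because it is a levelwise equivalence by the additivity-plus-Bousfield--Friedlander argument above; once phrased this way the compatibility is automatic. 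The remaining steps are formal manipulations with bisimplicial realizations and the realization lemma.
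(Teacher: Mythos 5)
Your verifications of product-preservation, homotopicality and group-likeness are fine and essentially match the paper's (the paper routes product-preservation through the equivalence $\calc\times\cald\simeq E(\calc,\calc\times\cald,\cald)$, but your direct argument via $S_\bullet^{(n)}(\calc\times\cald)\cong S_\bullet^{(n)}\calc\times S_\bullet^{(n)}\cald$ works too). The gap is in the pre-additivity step, which is the real content of the lemma.

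You claim that for each $n$ the map $\vert F(w_\bullet S_\bullet^{(n)} S_2\calc)\vert\to\vert F(w_\bullet S_\bullet^{(n)}\calc)\vert\times\vert F(w_\bullet S_\bullet^{(n)}\calc)\vert$ is already a weak equivalence, citing ``Waldhausen's additivity''. But Waldhausen's additivity theorem is a statement about the specific functor $\calc\mapsto\vert w\calc\vert$ (equivalently $\obj$); it is not available for an arbitrary $F\colon\Wald\to\psset$, and the lemma must hold for arbitrary $F$. Your proposed justification is moreover circular: the argument around diagram \eqref{eq:fund_square} that ``one more $S_\bullet$ makes the square a levelwise homotopy pullback'' rests precisely on the step ``$F$ is additive, so $F(S_{1+n}\calc)\simeq F(\calc)\times F(S_n\calc)$'' — i.e.\ on the pre-additivity of the input functor, which is exactly what you are trying to establish for $PF$ and which fails for a general $F$. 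For a general $F$ no finite level $\bfP F_n$ is additive; additivity is only achieved in the stable limit.

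The paper's proof handles this by never claiming a levelwise equivalence. It shows instead that $(\bfP F(d_2),\bfP F(d_0))$ induces an isomorphism on \emph{stable} homotopy groups $\pi_*\bfP F=\colim_n\pi_{*+n}\bfP F_n$ (which suffices for a stable equivalence of symmetric spectra), by writing down the explicit candidate inverse $\bfP F(s_1)_*+\bfP F(s_0)_*$. The key input is the elementary fact, proved by inspecting the $2$-skeleton as in \cite[1.3.4]{Wald}, that for a cofiber sequence $f\rightarrowtail g\twoheadrightarrow h$ of exact functors the maps $F(g)_*$ and $F(f)_*+F(h)_*$ agree only \emph{after one suspension}, i.e.\ after passing to $\pi_{*+1}\vert F(S_\bullet\cald)\vert$; since $\pi_*\bfP F$ is a colimit along the $S_\bullet$-tower, the identity $\bfP F(g)_*=\bfP F(f)_*+\bfP F(h)_*$ holds there, and applying it to the cofiber sequence $s_1d_2\rightarrowtail\id\twoheadrightarrow s_0d_0$ on $E(\cala,\calc,\calb)$ gives the left inverse. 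To repair your argument you would need to replace the appeal to the additivity theorem by this stable splitting (or by an independent proof of a levelwise additivity statement for arbitrary $F$, which is not true in general).
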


\begin{proof}
First, any functor of the form $\calc\mapsto \vert F(w_\bullet\calc)\vert$ is homotopical. Indeed, a natural weak equivalence between $f,g\colon \calc\to \cald$ induces a simplicial homotopy
\[w_\bullet \calc\times \Delta^1_\bullet \to w_\bullet\cald\]
inducing a simplicial homotopy
\[\colon F(w_\bullet \calc) \times \Delta^1_\bullet \to F(w_\bullet \cald),\]
which in turn realizes to a homotopy between $F(f)$ and $F(g)$. 

Moreover, from $F(*)\simeq *$ it follows $\bfP F(*)\simeq *$. To show additivity, denote for two Waldhausen subcategories $\cala,\calb\subset \calc$, by $E(\cala,\calc,\calb)\subset S_2\calc$ the extension category of \cite[1.1]{Wald}. It comes with boundary and degeneracy functors
\begin{align*}
d_2\colon E(\cala,\calc,\calb)\to \cala \quad\mathrm{and}\quad &d_0\colon E(\cala,\calc,\calb)\to \calb,\\
s_1\colon \cala\to E(\cala,\calc,\calb) \quad\mathrm{and}\quad & s_0\colon \calb\to E(\cala,\calc,\calb).
\end{align*}

In view of the equivalences
\[S_2\calc=E(\calc,\calc,\calc)\quad\mathrm{and}\quad 
\calc\times\cald\simeq E(\calc, \calc\times \cald, \cald),\]
additivity will follow from the assertion that
\[(\bfP F(d_2),\bfP F(d_0))\colon \bfP F(E(\cala,\calc,\calb))\to \bfP F(\cala)\times \bfP F(\calb)\]
is a stable equivalence. To do that, it is enough to show that this map induces an isomorphism on stable homotopy groups (defined using the naive formula). Indeed we will show that the map
\begin{equation}\label{eq:inverse_map}
\bfP F(s_1)_* + \bfP F(s_0)_* \colon \pi_* \bfP F(\cala)\oplus \pi_*\bfP F(\calb)\to  \pi_*\bfP F(E(\cala,\calc,\calb))
\end{equation}
is an inverse on stable homotopy groups.

Direct computation shows that \eqref{eq:inverse_map} defines a right inverse (here we use $\bfP F(*)\simeq *$ so that constant functors induce constant maps). Moreover, we note that generally that if $f\rightarrowtail g\twoheadrightarrow h$ is a cofiber sequence of exact functors $\calc\to\cald$, then the maps
\[F(g)_*\;~\mathrm{and}~\; F(f)_*+F(h)_*\colon \pi_* F(\calc)\to \pi_* F(\cald)\]
become equal after passing to $\pi_{*+1} \vert F(S_\bullet \cald)\vert$: This follows as in \cite[1.3.4]{Wald} by inspection of the 2-skeleton of $\vert F( S_\bullet\cald)\vert$. Applying this argument to the colimit system, it follows that
\[\bfP F(g)_* = \bfP F(f)_* + \bfP F(h)_*\colon \pi_* \bfP F(\calc)\to \pi_*  \bfP F(\cald).\]

We apply this observation to the cofiber sequence 
\[s_1d_2\rightarrowtail \id \twoheadrightarrow s_0 d_0\]
of endofunctors on $E(\cala,\calc,\calb)$ and conclude that \eqref{eq:inverse_map} is a left inverse.

Finally $PF$ is an infinite loop space hence group-like.
\end{proof}

We can now give the proof of Theorem \ref{thm:second}, showing more precisely that $\pi_F\colon F\to PF$ is an additivization. The argument
is the same as the one from last section, based around diagram \eqref{eq:main_diagram}, but with $(T,\tau)$ replaced by $(P,\pi)$, and $\chi_\univ\colon \obj\to K$ replaced by $\pi_F \colon F\to PF$. To make the argument work, we are left to show:

\begin{lemma}
The natural transformation $P\pi_F\colon PF\to P^2F$ is a natural weak equivalence.
\end{lemma}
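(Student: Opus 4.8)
The plan is to imitate, at the level of symmetric spectra, the argument that was used in Section~\ref{sec:proof} to prove that $\tau_K$ and $T\chi_\univ$ agree: produce a ``diagonal'' comparison and show that the two stabilization maps $\pi_{PF}$ and $P\pi_F$ from $PF$ to $P^2F$ differ only by an automorphism of $P^2F$, then invoke that $\pi_{PF}$ is a weak equivalence because $PF$ is additive (by the previous lemma) with $PF(*)\simeq *$. The point is that $\mathbf{P}^2F(\calc)$ is a bi-symmetric spectrum built from the functor $\calc\mapsto F(w_\bullet S^{(\bullet)}_\bullet w_* S^{(*)}_*\calc)$, and a Fubini-type argument identifies its realization with a single symmetric spectrum on the ``total'' iterated $S_\bullet$-construction, on which both stabilizations become the canonical one up to the evident block-permutation action of the symmetric groups interchanging the two families of $S_\bullet$-directions.

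Concretely, the first step is to reduce to $F$ reduced (replacing $F$ by $F/F(*)$ changes nothing, as in Section~\ref{sec:additivization}), so that $\mathbf{P}F$ is genuinely defined. Next I would set up the analogue of Lemma~\ref{lem:two_stabilizations_agree} in this setting: the two maps $\pi_{PF}, P\pi_F\colon PF\to P^2F$ are induced, respectively, by the $w_\bullet S^{(\bullet)}_\bullet$-stabilization applied ``on the outside'' versus ``on the inside'' of the already-stabilized functor $PF$, and after passing to a bi-indexed symmetric spectrum and realizing in one family of directions one gets two maps into $P^2F$ that are interchanged by the symmetric-group action permuting the two blocks of $S_\bullet$-directions and the corresponding loop coordinates. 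This permutation is an automorphism of $P^2F$ in the homotopy category, exactly as in Lemma~\ref{lem:two_stabilizations_agree}, so $P\pi_F$ is a weak equivalence if and only if $\pi_{PF}$ is. The final step is to observe that $PF$ is additive with $PF(*)\simeq *$ by the preceding lemma, hence $\mathbf{P}(PF)$ is an $\Omega$-spectrum and therefore semistable, so $\pi_{PF}\colon PF\to P(PF)$ is a natural weak equivalence — this is precisely the discussion preceding the statement of the present lemma.

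The main obstacle I anticipate is the bookkeeping needed to make the ``pull the $\Omega^\infty$ out'' / Fubini identification rigorous for symmetric spectra, rather than for the single $\Omega$ as in Lemma~\ref{lem:two_stabilizations_agree}: one needs that realizing a bi-symmetric-spectrum object in one simplicial family commutes appropriately with forming $\Omega^\infty$ in the other, up to stable equivalence, and that the resulting comparison is equivariant for the block permutations. For additive $F$ this is harmless because everything in sight is an $\Omega$-spectrum, but in general $\mathbf{P}F$ need not be semistable; the saving grace is that we only need the statement after applying $P$ a second time, and $\mathbf{P}(PF)$ \emph{is} an $\Omega$-spectrum. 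So the cleanest route is probably to avoid analyzing $\mathbf{P}F$ itself and instead argue entirely with the additive functor $PF$: once one knows $PF$ is additive, both $\pi_{PF}$ and (via the permutation argument) $P\pi_F$ are weak equivalences, and no delicate semistability question about $\mathbf{P}F$ ever arises.
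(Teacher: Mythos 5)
Your overall strategy --- reduce to the symmetry statement ``$\pi_{PF}$ and $P\pi_F$ differ by an automorphism of $P^2F$'' and then quote that $\pi_{PF}$ is an equivalence because $PF$ is additive --- is genuinely different from the paper's proof, which never establishes such a symmetry for $P$. Instead the paper uses Shipley's detection functor to write $PF=\Omega^\infty_\naive\bfP'F$, factors $\pi_F$ as $F\xrightarrow{\iota}\bfP F_0\xrightarrow{\eta}\bfP'F_0\xrightarrow{\eta'}PF$, and checks that each piece becomes a stable equivalence after applying $\bfP$; the essential inputs are the shift comparison $\bfP F\to\Omega^k\sh^k\bfP F$ and the fact that the Freudenthal map $F\to\Omega^i\Sigma^iF$ is a $\bfP$-equivalence, the latter resting on the connectivity estimate for $\vert F(S_\bullet^{(n)}\calc)\vert$ (Lemma \ref{lem:connectivity_of_realization}) plus the Freudenthal suspension theorem. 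None of these ingredients appears in your proposal, so if your route is to work it must supply a substitute for them.

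The genuine gap is exactly the step you flag as an ``obstacle'' and then claim to sidestep. The analogue of Lemma \ref{lem:two_stabilizations_agree} for $P$ requires identifying $P^2F=\Omega^\infty\bfP\bigl(\Omega^\infty\bfP F\bigr)$ with $\Omega^\infty$ of a single ``total'' double stabilization of $F$, equivariantly for the block permutation; this means commuting the \emph{derived} $\Omega^\infty$ of the \emph{inner}, possibly non-semistable spectrum $\bfP F$ past the outer realization $\vert-\vert_{w_\bullet S_\bullet^{(n)}}$. Your proposed escape --- ``argue entirely with the additive functor $PF$, so no delicate semistability question about $\bfP F$ ever arises'' --- is not coherent: $P\pi_F$ is by definition $P$ applied to the map $\pi_F\colon F\to PF$, whose source is $F$, so any identification of $P\pi_F$ with $\sigma\circ\pi_{PF}$ must go through an analysis of how the second stabilization interacts with the first one applied to $F$ itself. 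That analysis is the entire content of the lemma (it is where the derived $\Omega^\infty$, the $\cali$-homotopy colimits, and the loop/realization commutation via Bousfield--Friedlander and connectivity all enter), and in your write-up it is asserted rather than proved. If you carry it out you will find yourself reproving essentially the $\eta$- and $\eta'$-steps of the paper's argument, connectivity estimates included; as written, the proof is incomplete at its central point.
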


\begin{proof}
Again we can assume that $F$ is reduced. By  \cite[Theorem 3.1.6]{Shipley} an explicit description of $PF(\calc)$ is as follows: 
\[PF(\calc)=\Omega^\infty_\naive(\bfP' F(\calc))\]
where $\bfP' F(\calc)$ is the symmetric spectrum obtained from $\bfP F(\calc)$ by applying the ``detection functor''. Explicitly, the $n$-th space of $\bfP' F(\calc)$ is defined as
\[\bfP' F_n(\calc) = \hocolim_{k\in \cali} \Omega^k \Sigma^n \vert F(S_\bullet^{(k)} \calc)\vert\]
where the homotopy colimit is taken over the category $\cali$ of finite ordinals and injections, which acts on both the loops and the iterated $S_\bullet$-construction. The inclusion into the homotopy colimit induces a natural transformation 
\[\eta\colon \bfP F \to \bfP' F\]
of symmetric spectra; the natural transformation $\pi_F$ is then described by the composite
\[F \xrightarrow{\iota} \bfP F_0 \xrightarrow\eta \bfP' F_0 \xrightarrow{\eta'} \hocolim_{i\in \IN} \Omega^i \bfP' F_i=PF\]
where the first map $\iota$ includes as the 0-skeleton in the $w_\bullet$-direction, and $\eta, \eta'$ are given by  the inclusions into the respective homotopy colimit. It is not hard to see that $P\iota$ is a natural equivalence, because $PF$ is homotopical. 

To conclude the proof, we will show that both maps $\eta$ and $\eta'$ are ``$\bfP$-equivalences'', \emph{i.e.}, induce a natural stable equivalence after applying $\bfP$. As homotopy colimits preserve stable equivalences \cite[4.1.5]{Shipley}, it is enough to show that, in both cases, the map into each term of the homotopy colimit is a $\bfP$-equivalence.

As for the map $\eta$, denote by $\sh^k$ denotes the $k$-fold shift functor of symmetric spectra. As in the proof of Lemma \ref{lem:two_stabilizations_agree}, the Bousfield--Friedlander theorem provides a canonical stable  equivalence (indeed $\pi_*$-equivalence)
\[\bfP (\Omega^k \vert F(S_\bullet^{(k)} -)\vert) \simeq \Omega^k \sh^k \bfP F,\]
which ``pulls the loops out''. Under this equivalence the map into the $k$-th term of the homotopy colimit identifies with the canonical map $\bfP F \to \Omega^k \sh^k \bfP F$. But it is well-known that this map is a stable equivalence for any symmetric spectrum. 

As for $\eta'$, the following diagram
\[\xymatrix{
 F \ar[r]^f \ar[d]_{\eta}^{\simeq_\bfP}  & \Omega^i\Sigma^i F \ar[d]_{\eta}^{\simeq_\bfP}  \ar[rd]^{\Omega^i \eta}_{\simeq_\bfP}\\
 \bfP' F_0 \ar[r]^(.35)f & \Omega^i \Sigma^i \bfP' F_0 \ar[r] & \Omega^i \bfP'(\Sigma^i F)_0\ar@{=}[r] & \Omega^i \bfP' F_i
}\]
is commutative, where $f$ denote the Freudenthal map. The lower composite is the map under consideration. As shown in the first step, the vertical maps and the diagonal map are $\bfP$-equivalences. Therefore it is enough to show that the Freudenthal map $f\colon F\to \Omega^i \Sigma^i F$ is a $\bfP$-equivalence.

\begin{lemma}\label{lem:connectivity_of_realization}
If $X_\bullet$ is a (semi-)simplicial object in pointed simplicial sets,  with each $X_n$ $k$-connected and $X_0$ $(k+1)$-connected, then $\vert X_\bullet \vert$ is $(k+1)$-connected.
\end{lemma}

In the interesting cases $k\geq 0$ this lemma follows by induction on the skeletal filtration of the (reduced semi-simplicial) realization using the Blakers--Massey theorem. 
Applying the Lemma repeatedly, we see that $\vert F(S_\bullet^{(n)}\calc )\vert$ is $(n-1)$-connected so that
\[\vert F(S_\bullet^{(n)}\calc )\vert \to \vert \Omega^i \Sigma^i  F(S_\bullet^{(n)}\calc )\vert\]
is roughly $2n$-connected by Freudenthal's suspension theorem (and pulling loops out once again).  Hence $\bfP f$ induces an isomorphism on $\pi_*$ and therefore is a natural stable equivalence.
\end{proof}

\begin{proof}[Proof of Theorem \ref{thm:third}]
We denote $\cals:=\fpset$ and the representable functor $R_\cals:=\Wald(\cals, -)$. This functor comes with an evaluation transformation $R_\cals(\calc)\to \obj(\calc)$, sending $f$ to $f(S^0)$; a map in the reverse direction is specified by associating to $c\in \obj(\calc)$ a natural transformation that sends $S\cup\{*\}$ to an $S$-fold wedge sum of $c$. This is not quite an inverse, though, because there may be different choices of the wedge sum. However this construction shows that the \emph{categories} $R_\cals(\calc)$ and $\calc$ are equivalent as well as their subcategories of weak equivalences; hence 
\[\vert R_\cals(w_\bullet \calc)\vert \simeq \vert w\calc\vert.\]
Plugging this into the explicit formula of the additivization functor $P$, it follows that both functors $R_\cals$ and $\obj$ have the same additivization; in other words, the natural transformation
\[R_\cals(\calc) \to K(\calc), \quad f\mapsto \chi_{\univ}(f(S^0))\]
is an additivization.

We use this to define an inverse of $e$. For $x\in A(\cals)$, consider the natural transformation
\[\alpha_x\colon R_\cals(\calc)=\Wald(\cals, \calc) \to A(\calc), \quad f \mapsto f_*(x)\]
As the target is additive, the natural transformation extends through the additivization to a unique element $a_x$ of $[K, A]$. If $x$ is homotopic to $y$, then $\alpha_x$ is homotopic to $\alpha_y$ and it follows that $a_x=a_y$. Therefore we get a well-defined map 
\[f\colon \pi_0 A(\cals)\to [K,A], \quad x\mapsto a_x.\]

By construction, $e(a_x)=x$. Moreover, if $a\in [K,A]$, we consider the commutative square
\[\xymatrix{
\pi_0 K(\cals) \ar[d]^{a_*} \ar[r]^f & [K,K] \ar[d]^{a_*}\\
\pi_0 A(\cals) \ar[r]^f & [K, A]
}\]
By construction, the element $a\in [K,A]$ is the image of $\id\in [K,K]$, which is the image of $\chi(S^0)\in \pi_0 K(\cals)$. It follows that $a$ is in the image of $f$. But $a$ was chosen arbitrarily, so $f$ is surjective. 
\end{proof}

\section{Concluding Remarks}\label{sec:concluding_remarks}

\subsection{}\label{strengthening} The proof of Theorem \ref{thm:main} actually shows that the whole \emph{space} $\hom((K,\chi_{\univ}),(A,\chi))$ of morphisms (in the hammock localization of $\Eul$ or the associated $\infty$-category) is contractible. In fact, the composite map
\begin{multline*}
\hom((K,\chi_{\univ}),(A,\chi))\xrightarrow{T} \hom((TK, \tau_K\circ\chi_{\univ}), (TA, \tau_A\circ \chi) \\
\xrightarrow{Ti^*} \hom(K,\chi_{\univ}), (TA, \tau_A\circ\chi)) 
\end{multline*}
obtained from diagram \eqref{eq:main_diagram} above is an equivalence and constant at the same time. 

\subsection{} As pointed out in the introduction, the above arguments only depend on the fact that the category of Waldhausen categories allows an $S_\bullet$-construction satisfying a few formal properties. This gives the freedom to transfer the results to different settings. For instance, we can replace the category $\Wald$ by the category $\catinfst$ of (small) stable $\infty$-categories and conclude that the functor $K(\calc)=\Omega\vert \iso S_\bullet\calc\vert$ is the additivization of the functor $\calc\mapsto \iso\calc$. 



\begin{thebibliography}{DHKS04}

\bibitem[Bar14]{Barwick}
C.~Barwick.
\newblock On the algebraic {$K$}-theory of higher categories.
\newblock arXiv:1204.3607v5, 2014.

\bibitem[BF78]{BF}
A.~K. Bousfield and E.~M. Friedlander.
\newblock Homotopy theory of {$\Gamma $}-spaces, spectra, and bisimplicial
  sets.
\newblock In {\em Geometric applications of homotopy theory ({P}roc. {C}onf.,
  {E}vanston, {I}ll., 1977), {II}}, volume 658 of {\em Lecture Notes in Math.},
  pages 80--130. Springer, Berlin, 1978.

\bibitem[BGT13]{BGT}
Andrew~J. Blumberg, David Gepner, and Gon{\c{c}}alo Tabuada.
\newblock A universal characterization of higher algebraic {$K$}-theory.
\newblock {\em Geom. Topol.}, 17(2):733--838, 2013.

\bibitem[DHKS04]{DHKS}
William~G. Dwyer, Philip~S. Hirschhorn, Daniel~M. Kan, and Jeffrey~H. Smith.
\newblock {\em Homotopy limit functors on model categories and homotopical
  categories}, volume 113 of {\em Mathematical Surveys and Monographs}.
\newblock American Mathematical Society, Providence, RI, 2004.

\bibitem[Seg74]{Segal}
Graeme Segal.
\newblock Categories and cohomology theories.
\newblock {\em Topology}, 13:293--312, 1974.

\bibitem[Shi00]{Shipley}
Brooke Shipley.
\newblock Symmetric spectra and topological {H}ochschild homology.
\newblock {\em $K$-Theory}, 19(2):155--183, 2000.

\bibitem[Wal85]{Wald}
Friedhelm Waldhausen.
\newblock Algebraic {$K$}-theory of spaces.
\newblock In {\em Algebraic and geometric topology ({N}ew {B}runswick,
  {N}.{J}., 1983)}, volume 1126 of {\em Lecture Notes in Math.}, pages
  318--419. Springer, Berlin, 1985.

\end{thebibliography}
\end{document}